\title{New Normal Forms For Degree Three Polynomials and Rational Functions}
\author{Heidi Benham}
\address{Department of Mathematics,
         Western Oregon University,
         Monmouth OR 97361}
\email{hbenham17@wou.edu}
\author{Alexander Galarraga}
\address{Department of Mathematics,
         University of Washington,
         Seattle, WA, 98195}
\email{agalar@uw.edu}
\author{Benjamin Hutz}
\address{Department of Mathematics and Statistics,
         Saint Louis University,
         St.~Louis, MO 63103}
\email{benjamin.hutz@slu.edu}
\author{Joey Lupo}
\address{Department of Mathematics and Statistics,
         Amherst College,
         Amherst, MA 01002}
\email{jlupo20@amherst.edu}
\author{Wayne Peng}
\address{Department of Mathematics,
        University of Rochester
        Rochester, NY 14627}
\email{jpeng4@ur.rochester.edu}
\author{Adam Towsley}
\address{School of Mathematical Sciences,
        Rochester Institute of Technology
        Rochester, NY 14623}
\email{adtsma@rit.edu}
\subjclass[2010]{
37P45,   	
37P05,   	
(37P15)     
}
\keywords{dynamical system, normal form, moduli space}
\newtheorem{thm}{Theorem}
\newtheorem{cor}[thm]{Corollary}
\newtheorem{lem}[thm]{Lemma}
\newtheorem*{conj*}{Conjecture}
\theoremstyle{definition}
\newtheorem{defn}[thm]{Definition}
\newtheorem{ex}[thm]{Example}
\newtheorem{rem}[thm]{Remark}
\def\AA{\mathbb{A}}
\def\QQ{\mathbb{Q}}
\def\Qbar{\overline{\QQ}}
\def\CC{\mathbb{C}}
\def\PP{\mathbb{P}}
\def\cP{\mathcal{P}}
\def\cM{\mathcal{M}}
\DeclareMathOperator{\PGL}{PGL}
\DeclareMathOperator{\Aut}{Aut}
\DeclareMathOperator{\Gal}{Gal}
\DeclareMathOperator{\Fix}{Fix}
\providecommand{\open}[1]{\textbf{Open:} \textcolor{red}{#1}}
\providecommand{\new}[1]{\textcolor{blue}{#1}}
\def\wayne#1{\textcolor{magenta}{#1}}
\definecolor{codegreen}{rgb}{0,0.6,0}
\definecolor{codegray}{rgb}{0.5,0.5,0.5}
\definecolor{codepurple}{rgb}{0.58,0,0.82}
\definecolor{backcolour}{rgb}{0.95,0.95,0.92}
\lstdefinestyle{mystyle}{
   backgroundcolor=\color{backcolour},
   commentstyle=\color{codegreen},
   keywordstyle=\color{magenta},
   numberstyle=\tiny\color{codegray},
   stringstyle=\color{codepurple},
   basicstyle=\ttfamily\footnotesize,
   breakatwhitespace=false,
   breaklines=true,
   captionpos=b,
   keepspaces=true,
   numbers=left,
   numbersep=5pt,
   showspaces=false,
   showstringspaces=false,
   showtabs=false,
   tabsize=2
}
\begin{document}

\thanks{Thanks to the Institute for Computational and Experimental Research in Mathematics, where most of this work was completed during the Summer 2019 Research Experience for Undergraduates. Thanks to an anonymous referee who provided many helpful comments and corrections.}

\maketitle

\begin{abstract}
    When studying families in the moduli space of dynamical systems, choosing an appropriate representative function for a conjugacy class can be a delicate task. The most delicate questions surround rationality of the conjugacy class compared to rationality of the defining polynomials of the representation. We give a normal form for degree three polynomials which has the property that the set of fixed points is equal to the set of fixed point multipliers. This normal form is given in terms of moduli space invariants and, hence, has nice rationality properties. We further classify all degree three rational maps which can be conjugated to have a similar relationship between the fixed points and the fixed point multipliers.
\end{abstract}

Let $f(z) \in K(z)$ be a rational function of degree $d\geq2$ defined over a perfect field $K$, considered as an endomorphism of $\PP^1$. For example, $f(z) = z^2+1$ represents the endomorphism of $\PP^1$ given by $(x:y) \mapsto (x^2 + y^2 : y^2)$. Define the $n$\emph{-th iterate} of $f$ recursively as $f^n(z) = f(f^{n-1}(z))$, with $f^0(z) = z$. There is a natural conjugation action on $f$ by $\alpha \in \PGL_2$ given by $f^{\alpha} = \alpha^{-1} \circ f \circ \alpha$. Since the dynamical behavior of $f$ is preserved by this conjugation action, we may consider the set of equivalence classes of degree $d$ rational endomorphisms of $\PP^1$ under $\PGL_2$ conjugation. We denote $f$ for maps and $[f]$ for conjugacy classes represented by the map $f$.  We denote this moduli space as $\cM_d$, and denote by $\cP_d \subset \cM_d$ the moduli space of degree $d$ polynomials \cite[Section 4.4]{Silverman}. 
Recall that $[f] \in \cP_d$ if and only if it has a totally ramified fixed point. For our purposes, we say a family of maps $f_t(z)$ provides a \emph{normal form} if each choice of parameter value determines a different conjugacy class of $\cM_d$.

The choice of normal form can be quite delicate. For degree two rational maps, Milnor proved that $\cM_2 \cong \CC^2$ with the isomorphism defined in terms the multipliers of the fixed points \cite{Milnorquadratics}. Let $\lambda_1$, $\lambda_2$, and $\lambda_3$ be the multipliers of the three fixed points and define the invariants of $\cM_2$
\begin{align*}
    \sigma_1 &= \lambda_1 + \lambda_2 + \lambda_3\\
    \sigma_2 &= \lambda_1\lambda_2 + \lambda_1\lambda_3 + \lambda_2\lambda_3\\
    \sigma_3 &= \lambda_1\lambda_2\lambda_3.
\end{align*}
The isomorphism is then given by
\begin{equation*}
    [f] \mapsto (\sigma_1,\sigma_2).
\end{equation*}
Note that for all maps, $\sigma_3 = \sigma_1 - 2$. In particular, the pair of values $(\sigma_1,\sigma_2)$ form coordinates for the moduli space $\cM_2$ and conjugacy classes $[f]$ where $(\sigma_1,\sigma_2) \in \QQ^2$ are deemed ``rational points'' in the moduli space. The delicate question is: Given $(\sigma_1,\sigma_2) \in \QQ^2$ associated to a conjugacy class $[f]$ is there a representative of $[f]$ which is defined over $\QQ$? Given an appropriate set of coordinates of $\cM_d$, this question is valid in any degree $d>1$ and, more generally, is the ``field of moduli versus field of definition'' problem, see for example \cite[Section 4.10]{Silverman}.

For $\cM_2$, Milnor  \cite{Milnorquadratics} first gave the normal form
\begin{equation} \label{eq_milnor}
    [f] \longleftrightarrow \begin{cases}
      \frac{z^2 + \lambda_1 z}{\lambda_2 z + 1} & \lambda_1\lambda_2 \neq 1\\
      z + \sqrt{1-\lambda_3} + \frac{1}{z} & \lambda_1\lambda_2=1.
    \end{cases}
\end{equation}
However, even if $(\sigma_1,\sigma_2) \in \QQ^2$ it may be that the multipliers themselves, $\lambda_1,\lambda_2$ are defined over a quadratic extension. So this normal form does not have ``nice'' rationality properties. Manes-Yasafuku \cite{Yasufuku} give a normal form (for all but finitely many conjugacy classes),
\begin{equation*}
    [f] \longleftrightarrow \frac{2z^2 + (2-\sigma_1)z + (2-\sigma_1)}{-z^2 + (2+\sigma_1)z + 2-\sigma_1-\sigma_2}.
\end{equation*}
Since the normal form is defined in terms of $(\sigma_1,\sigma_2)$, it has nice rationality properties. Interestingly, this normal form satisfies that the fixed points are equal to their multipliers. Theorem \ref{thm:2} proves when an analogous normal form exists for degree 3 maps. The precise definition of ``partial fixed-point multiplier form'' is given in Definition \ref{defn_fixed_form}.
\begin{thm}\label{thm:2}
     Let $f$ be a degree 3 rational function defined over a perfect field $K$. Then there exists a $\rho \in \PGL_2(\overline{K})$ such that $f^{\rho}$ is in partial fixed-point multiplier form and defined over $K$ if and only if the following conditions are satisfied
    \begin{enumerate}
        \item There are at least $\min(\#\Fix(f),3)$ distinct fixed point multipliers. \label{cond_1}
        \item One of the following \label{cond_2}
        \begin{enumerate}
            \item $f$ has a $\Gal(\overline{K}/K)$ invariant set of $\min(\#\Fix(f),3)$ distinct fixed points. \label{cond_2a}
            \item The automorphism group of $f$ has order $2$. Moreover, let $\alpha$ be the nontrivial automorphism. Then for any fixed point $x$ and for any $\sigma\in\Gal(\overline{K}/K)$, we have either $\sigma(x)=x$ or $\sigma(x)=\alpha(x)$. \label{cond_2b}
            \item $f^{\rho}$ is in fixed-point multiplier form and is one of the maps in Lemma \ref{lem_4_distinct}.\label{cond_2c}
        \end{enumerate}
    \end{enumerate}
\end{thm}

A main application of these types of normal forms is enumerating all maps satisfying certain properties, such as being post-critically finite; which we now define. A \emph{critical point of $f$} is a point with ramification index at least two. When the forward orbits of all the critical points are finite, we say the map is \emph{post-critically finite (PCF)}.

Ingram proves that the set of PCF maps in $\cP_d$ is a set of bounded height and, hence, finite for any number field \cite{Ingram-height-bound}. He goes on to calculate a specific bound for the coefficients of a degree three PCF polynomial in monic centered form and enumerates all possibilities over $\QQ$. However, in choosing to use monic centered form,
\begin{equation*}
    z^d + a_{n-2}z^{d-2} + \cdots + a_1z + a_0,
\end{equation*}
as his normal form, he did not find all PCF polynomials defined over $\QQ$. The problem was one of the delicate rationality issues: not every PCF degree three polynomial defined over $\QQ$ is conjugate to a polynomial in monic centered form also with coefficients in $\QQ$. Tobin \cite{Bella} improves on his classification with her normal form for bicritical polynomials, but also omits a case due to the same type of rationality issue with conjugation changing the field of definition. Anderson-Manes-Tobin cover this last case, completing the enumeration of all degree three PCF polynomials defined over $\QQ$ \cite{AMT}. The normal form provided in Theorem \ref{normalform} is given in terms of moduli space invariants and solves these subtle rationality type issues.
    \begin{thm}\label{normalform}
        Define a map $\phi: \AA^2 \to \cP_3$, from the affine plane to the moduli space of degree three polynomials as follows

        \begin{equation*}
        \phi(\sigma_1, \sigma_3) =
            \begin{cases}
              \left[\frac{(12-2\sigma_1)z^3 + (2\sigma_1^2 - 15\sigma_1 + 18)z^2 + 2\sigma_1\sigma_3 - 3\sigma_3}{-3z^3 + (9 + 3\sigma_1)z^2 - (18\sigma_1 - 27)z + 4\sigma_1^2 -12\sigma_1 + 3\sigma_3 + 9}\right] & (\sigma_1, \sigma_3) \notin \mathcal{C} \\
              \\
              \left[\frac{(9\sigma_1 - 27)z^3}{(-5\sigma_1+12)z^3 + (\sigma_1^2 +15\sigma_1 - 45)z^2 + (-9\sigma_3)z - \sigma_1\sigma_3 + 6\sigma_3}\right] & (\sigma_1,\sigma_3) \in \mathcal{C},\\
              &\sigma_1 \neq 3,6,\frac{3}{2} \\
              \\
              [z^3] & (\sigma_1, \sigma_3) = (6, 0) \\
              [z^3 + z] & (\sigma_1, \sigma_3) = (3, 1)\\
              \left[z^3 + \frac{3}{2}z\right] &  (\sigma_1, \sigma_3) = (\frac{3}{2}, 0),
           \end{cases}
        \end{equation*}
        where $\mathcal{C}$ is the curve $4\sigma_1^3 - 36\sigma_1^2 + 81\sigma_1 + 27\sigma_3 - 54 = 0$. The map $\phi$ is a bijection, with inverse given by taking the sigma invariants. Further, the first two cases are in partial fixed-point multiplier form.
    \end{thm}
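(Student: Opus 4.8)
The plan is to use the $\sigma$-invariants as a complete set of conjugacy invariants for $\cP_3$ and to exhibit $\phi$ as the inverse of the resulting coordinate map. Write $\sigma\colon \cP_3 \to \AA^2$ for the map $[f]\mapsto(\sigma_1,\sigma_3)$, which is well defined because the multiplier multiset, hence every $\sigma$-invariant, is a conjugacy invariant. I will show $\sigma$ is a bijection and that $\sigma\circ\phi=\mathrm{id}_{\AA^2}$; since $\sigma$ is a bijection, $\sigma\circ\phi=\mathrm{id}$ forces $\phi=\sigma^{-1}$, so $\phi$ is a bijection. In particular this simultaneously shows every case of $\phi$ lands in $\cP_3$ and realizes the advertised invariants.

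First I would record why only two invariants appear. A degree $3$ polynomial has the totally ramified fixed point at $\infty$ with multiplier $0$ together with three finite fixed points of multipliers $\lambda_1,\lambda_2,\lambda_3$, so $\sigma_1=\lambda_1+\lambda_2+\lambda_3$, $\sigma_2=\lambda_1\lambda_2+\lambda_1\lambda_3+\lambda_2\lambda_3$, and $\sigma_3=\lambda_1\lambda_2\lambda_3$. The holomorphic index relation $\sum 1/(1-\lambda)=1$ over all fixed points, with the $\infty$-term equal to $1$, collapses to $\sum_i 1/(1-\lambda_i)=0$, i.e. $\sigma_2=2\sigma_1-3$. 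Thus $\sigma_2$ is redundant and $(\sigma_1,\sigma_3)$ carries all the multiplier information; the one place this argument must be rerun by hand is where some $\lambda_i=1$, which I will see is exactly the parabolic point $(\sigma_1,\sigma_3)=(3,1)$.

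The heart is showing $\sigma$ is a bijection, which I would do through monic centered form. Every class in $\cP_3$ has a representative $f(z)=z^3+az+b$, and a direct computation gives $\sigma_1=6-3a$ and $\sigma_3=27b^2+4a^3-12a^2+9a$ (so indeed $\sigma_2=2\sigma_1-3$). Because the only conjugations preserving monic centered form are $z\mapsto\pm z$, acting by $b\mapsto-b$, the class is faithfully recorded by the pair $(a,b^2)$; and since $a=2-\sigma_1/3$ and $27b^2=\sigma_3-(4a^3-12a^2+9a)$ recover $(a,b^2)$ from $(\sigma_1,\sigma_3)$, the map $\sigma$ is a bijection onto $\AA^2$. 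This computation also pins down the exceptional geometry: the symmetric locus $b=0$ is precisely the curve $\mathcal{C}$, since substituting $a=2-\sigma_1/3$ into $\sigma_3=4a^3-12a^2+9a$ returns the defining equation of $\mathcal{C}$; factoring shows $\sigma_3=0$ on $\mathcal{C}$ exactly at $\sigma_1\in\{6,\tfrac32\}$ (the classes of $z^3$ and $z^3+\tfrac32 z$), while $\sigma_1=3$ on $\mathcal{C}$ forces $\sigma_3=1$ (the parabolic $z^3+z$).

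It remains to verify $\sigma\circ\phi=\mathrm{id}$ case by case, which is where I expect the real work and the main obstacle to lie. For the three exceptional points the representatives are literal polynomials and the invariants are checked immediately ($z^3$ has multiplier multiset $\{0,0,3,3\}$; $z^3+z$ has a parabolic triple fixed point; $z^3+\tfrac32 z$ has fixed critical points). For the two rational families I would first confirm each is genuinely a degree $3$ element of $\cP_3$ by exhibiting its totally ramified fixed point, equivalently by producing the M\"obius conjugation carrying it to a monic centered $z^3+az+b$, and then compute $(\sigma_1,\sigma_3)$ of that polynomial to see it returns the input. The generic first formula is built from the non-symmetric conjugation and so degenerates exactly when $b=0$, i.e. on $\mathcal{C}$; the second formula is a different rational representative valid on $\mathcal{C}$ away from its own three degeneracies. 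The main obstacle is the bookkeeping of these degenerations: showing that the numerator and denominator of each formula retain degree $3$ with no common factor off the stated loci, that the conjugation to polynomial form stays invertible there, and that the three removed points are precisely the classes with extra automorphisms, a parabolic fixed point, or fixed critical points, so that nothing in $\cP_3$ is omitted or double counted.
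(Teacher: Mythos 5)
Your proposal is correct, and it ends at the same computational checkpoint as the paper --- one must still verify by direct symbolic computation that each displayed formula defines a degree $3$ element of $\cP_3$ whose multiplier multiset is $\{\sigma_1,\,2\sigma_1-3,\,\sigma_3,\,0\}$, which the paper delegates to Sage and you correctly identify as the real bookkeeping --- but you reach it by a genuinely different structural route. The paper gets injectivity for free from the invariance of the multiplier multiset and gets surjectivity by citing Fujimura--Nishizawa for the fact that the fixed-point multipliers determine the conjugacy class of a cubic polynomial; you instead prove outright that the invariant map $[f]\mapsto(\sigma_1,\sigma_3)$ is a bijection $\cP_3\to\AA^2$ via the monic centered form $z^3+az+b$, using the explicit formulas $\sigma_1=6-3a$ and $\sigma_3=27b^2+4a^3-12a^2+9a$ (both of which check out) together with the observation that the conjugacy class determines and is determined by $(a,b^2)$. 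This is more self-contained, since it reproves the Fujimura--Nishizawa input in the polynomial case, and it buys a conceptual explanation that the paper only offers as an after-the-fact remark via resultants: $\mathcal{C}$ is exactly the symmetric locus $b=0$ of odd polynomials, and the three excluded points are exactly where $\sigma_3=0$ or $\sigma_1=3$ on that locus, matching $z^3$, $z^3+\tfrac32 z$, and $z^3+z$. One slip worth fixing: the locus where some finite multiplier equals $1$ is not only the point $(3,1)$ but the entire line $\sigma_3=\sigma_1-2$ (e.g.\ $z^3-2z+2$ has a double fixed point at $1$ with multiplier $1$), so the holomorphic-index derivation of $\sigma_2=2\sigma_1-3$ should be extended to that line by Zariski density or by the polynomial identity $3-2\sigma_1+\sigma_2=\sum_i\prod_{j\neq i}(1-\lambda_j)$; this is harmless, however, because your bijectivity argument for $\sigma$ never actually uses the index relation --- it is needed only when you verify the invariants of the two rational families, exactly as in the paper.
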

While the above rational functions are polynomials in the sense of having a totally ramified fixed point, they are not in the standard form of polynomials. We perform the necessary conjugation to standard polynomial form to find the following bijection.
\begin{cor}\label{polynomial_normal_form}
Define a map $\phi: \AA^2 \to \cP_3$, from the affine plane to the moduli space of degree three polynomials as follows
\begin{align*}
    &\varphi(\sigma_1, \sigma_3) = \\
    &\begin{cases}
      \Big[\frac{1}{{(16\sigma_1^4 - 144\sigma_1^3 + 468\sigma_1^2 - 648\sigma_1 + 324)}} \Big((4\sigma_1^3 - 36\sigma_1^2 + 81\sigma_1 + 27\sigma_3 - 54)z^3\\
      \qquad+ (8\sigma_1^4 - 96\sigma_1^3 + 378\sigma_1^2 + 54\sigma_1\sigma_3 - 594\sigma_1 - 162\sigma_3 + 324)z^2 & (\sigma_1, \sigma_3) \not \in \mathcal{C} \\
      \qquad + (36\sigma_1^2\sigma_3 - 216\sigma_1\sigma_3 + 324\sigma_3)z\\
      \qquad + (8\sigma_1^3\sigma_3 - 72\sigma_1^2\sigma_3 + 216\sigma_1\sigma_3 - 216\sigma_3)\Big)\Big]\\ \\
      \left[\frac{(-4\sigma_1\sigma_3 + 24\sigma_3)z^3 + (6\sigma_1^2 - 45\sigma_1 - 9\sigma_3 + 54)z^2 + (-2\sigma_1^2 + 33\sigma_1 - 45)z - 2\sigma_1 - 6}{9\sigma_1 - 27}\right]  & (\sigma_1, \sigma_3) \in \mathcal{C}  \\ \\
      [z^3] & (\sigma_1, \sigma_3) = (6, 0) \\
      [z^3 + z] & (\sigma_1, \sigma_3) = (3, 1)\\
      \left[z^3 + \frac{3}{2}z\right] &  (\sigma_1, \sigma_3) = (\frac{3}{2}, 0),
    \end{cases}
\end{align*}
\end{cor}

Our normal form is not, however, the first to solve these rationality issues, nor is it the simplest, as a much simpler normal form appears in \cite[Section 4]{points-of-small-height}. Our normal form does have the advantage that it provides an explicit inverse to the well known map resulting from taking the sigma invariants by parametrizing $\cP_3$ in terms of the sigma invariants. Thus, our normal form provides a way to complete the classification of degree three PCF polynomials utilizing the bound from Benedetto-Ingram-Jones-Levy; however, this classification was completed by Anderson-Manes-Tobin \cite{AMT} while our work was underway so we do not repeat their classification calculations here, but do provide our normal form for future applications.

Moving to the more difficult rational maps case, Lukas-Manes-Yap \cite{LMY} find all degree two PCF rational maps defined over $\QQ$ using the normal form from Manes-Yasafuku \cite{Yasufuku}. This allows them to use the bound on the multipliers of a PCF map from Benedetto-Ingram-Jones-Levy \cite{BIJL} to enumerate all possibilities without the problem of omission encountered by Ingram and Tobin when bounding the coefficients. In Theorem \ref{thm:2}, we prove that the idea behind the normal form from Manes-Yasafuku \cite{Yasufuku}, the fixed points being ``equal'' to their multipliers, is only possible for certain classes of degree three rational maps and so cannot be used for further PCF classification results.

The article is organized as follows. In Section \ref{sect_deg_3} we prove Theorem \ref{normalform}. In Section \ref{sect_fixed_point_form} we investigate the existence of a normal form for degree three rational maps proving Theorem \ref{thm:2} and provide some examples, while in Section \ref{first_dynatomic} we investigate some auxiliary results and corollaries.

\section{Normal Form for Degree 3 Polynomials} \label{sect_deg_3}

    Let $f$ be a single variable rational function. Define a \emph{fixed point} of $f$ to be a point $P\in\Qbar$ such that $f(P) = P$, and the \emph{multiplier at P} to be $\lambda_P := f'(P)$. We may make a linear change of variables to calculate $\lambda_{\infty}$ as needed. Define the \emph{$\sigma$-invariants} (or \emph{sigma invariants}) of $f$ to be the elementary symmetric polynomials evaluated on the set of fixed point multipliers (with multiplicity):
    \begin{equation*}
        \prod_{P \in \Fix(f)}(t - \lambda_P) = \sum_{i=0}^d (-1)^i \sigma_it^{d-i}.
    \end{equation*}
    Because the set of fixed point multipliers is preserved under conjugation, these $\sigma$-invariants are the same for every element of a conjugacy class, i.e., are invariants of $\cM_d$.

   Milnor's \cite{Milnorquadratics} 
   normal form for degree two rational maps is given in terms of the multipliers themselves (Equation \eqref{eq_milnor}). However, such a form may not utilize the smallest possible field of definition since the multipliers could be defined over an extension field. The $\sigma$-invariants are always defined over the field of moduli so a normal form based on the $\sigma$-invariants has the smallest possible field of definition. See Silverman \cite[Section 4.10]{Silverman} for precise definitions for the field of moduli and field of definition and some of the associated subtleties.

    \begin{proof}[Proof of Theorem \ref{normalform}]
        We note that the curve $\mathcal{C}$ corresponds to the resultant of the numerator and denominator of the first form of the image of $\phi$. In other words, if $(\sigma_1, \sigma_3)\in\mathcal{C}$, then there is a common root between the numerator and denominator so that the degree decreases and the map is no longer an element of $\cP_3$. Similarly, computing the resultant of the second form shows that the degree of this form decreases when $\sigma_1\in \{3,6\}$ or $\sigma_3 = 0$. When $\sigma_3 =0$, then $\sigma_1$ is either $6$ or $\frac{3}{2}$, so the three conditions in the second case are required. These remaining three cases for the image of $\phi$ cover all of $\mathcal{P}_3$.

        First, we verify that $\phi$ actually maps into $\mathcal{P}_3$. We can compute in Sage \cite{sage} using the \texttt{DynamicalSystem} functionality that the first form has a totally ramified fixed point at $z=\frac{2}{3}\sigma_1 - 1$ and the second form at $z=0$. The final three cases are clearly polynomials.

        Now let $(\sigma_1,\sigma_3) \in \mathbb{A}^2$. We next show that in each case, the $\sigma$-invariants of $\phi(\sigma_1, \sigma_3)$ are given by $\{\sigma_1, \sigma_2, \sigma_3, 0\}$. First, assume $(\sigma_1, \sigma_3) \notin \mathcal{C}$. From \cite[Proposition 8]{Hutz10}, we can say $\sigma_2 = 2\sigma_1 -3$. We use Sage to compute that the $\sigma$-invariants of $\phi(\sigma_1,\sigma_3)$
        are given by
        \begin{equation*}
            \{\sigma_1, 2\sigma_1 -3, \sigma_3, 0\},
        \end{equation*}
        as desired. Now assume $(\sigma_1, \sigma_3)\in \mathcal{C}$ so that, in particular, we can say $\sigma_3 = \frac{1}{27}(-4\sigma_1^3 + 36\sigma_1^2 - 81\sigma_1 + 54)$. We next compute the $\sigma$-invariants of $\phi(\sigma_1, \sigma_3)$. Since $(\sigma_1, \sigma_3)\in \mathcal{C}$, we take the remainder of each $\sigma_i$ on division by the defining polynomial of $\mathcal{C}$ to obtain
        \begin{equation*}
            \{\sigma_1, 2\sigma_1 -3, \frac{1}{27}(-4\sigma_1^3 + 36\sigma_1^2 - 81\sigma_1 + 54), 0\},
        \end{equation*}
        as desired. The last three cases are easily verified.

        Now it is easy to show that $\phi$ is one-to-one. Assume there is $(\sigma_1', \sigma_3')\in \AA^2$ such that $\phi(\sigma_1, \sigma_3) = \phi(\sigma_1', \sigma_3')$. Combining this assumption with the argument of the preceding paragraph yields
        \begin{equation*}
            \{\sigma_1, \sigma_2, \sigma_3, 0\} = \{\sigma_1', \sigma_2', \sigma_3', 0\},
        \end{equation*}
        so that we conclude $\sigma_1 = \sigma_1'$ and $\sigma_3 = \sigma_3'$. To prove that $\phi$ is onto, we use the fact from Fujimura-Nishizawa \cite{Fujimura} that the set of fixed point multipliers, and thus the set of $\sigma$-invariants, uniquely determines the conjugacy class of a cubic polynomial. In particular, given $[f]\in \cP_3$ with $\sigma$-invariants $\{\sigma_1, \sigma_2, \sigma_3, 0\}$, we conclude that $\phi(\sigma_1, \sigma_3) \in [f]$. It follows that $\phi$ is a bijection.

        Finally, we show that the first two cases are in partial fixed-point multiplier form. For the first case we have $(\sigma_1,\sigma_3) \not\in \mathcal{C}$ and compute the first dynatomic polynomial as
        \begin{equation*}
            (-1/3)(3z - 2\sigma_1 + 3)(z^3 - \sigma_1z^2 + (2\sigma_1 - 3)z - \sigma_3).
        \end{equation*}
        The fixed point $\frac{2\sigma_1-3}{3}$ is totally ramified so has multiplier $0$, and we know $\sigma_2 = 2\sigma_1-3$. Hence, the roots of the degree three factor of the dynatomic polynomial are the three nonzero multiplier values. Since the roots of the first dynatomic polynomial are also the fixed points, we know those fixed points and their multipliers are equal as sets. To establish partial fixed-point multiplier form, we need to see the fixed points correspond to the multiplier with the same value. We check this by specialization to any valid $(\sigma_1,\sigma_3)$ where the fixed points remain distinct.

        For the second case we have $(\sigma_1,\sigma_3) \in \mathcal{C}$ to write
        \begin{equation*}
            \sigma_3 = \frac{1}{27}\left(-4\sigma_1^3 + 36\sigma_1^2 - 81\sigma_1 + 54\right).
        \end{equation*}
        Then the first dynatomic polynomial factors as
        \begin{equation*}
            \frac{1}{27}(3z + (-2\sigma_1 + 3))(3z + (\sigma_1 - 6))((15\sigma_1 - 36)z + (2\sigma_1^2 - 15\sigma_1 + 18))z
        \end{equation*}
        giving the four fixed points
        \begin{equation*}
            \left\{\frac{2}{3}\sigma_1 - 1, 2-\frac{1}{3}\sigma_1, \frac{-2\sigma_1^2 + 15\sigma_1 - 18}{15\sigma_1-36}, 0\right\}.
        \end{equation*}
        The corresponding multipliers can be calculated directly as
        \begin{equation*}
            \left\{\frac{2}{3}\sigma_1 - 1, 2-\frac{1}{3}\sigma_1, \frac{2}{3}\sigma_1 - 1, 0\right\}.
        \end{equation*}
        Thus, this map is in partial fixed-point multiplier form.
    \end{proof}

\section{Fixed-point Multiplier Form}\label{sect_fixed_point_form}


    In the first two cases of Theorem \ref{normalform}, the representative given by $\phi$ has the unique property that the three non-zero fixed point multipliers are three of its fixed points. This property mirrors that of the normal form given in Manes-Yasufuku \cite{Yasufuku}. Recall that there is a unique element of $\PGL_2$ that moves three points in $\PP^1(\CC)$ to three other points. Since the form \cite{Yasufuku} is for $\mathcal{M}_2$, where every map has exactly three fixed points, there is a unique conjugation to a map whose fixed points and fixed point multipliers coincide.
    One might hope there is a more general normal form defined over the field of moduli for $\mathcal{M}_3$ with the property that the fixed-point multipliers equal the fixed points. However, as Theorem \ref{thm:2} clarifies, this is not the case.

    \begin{defn}\label{defn_fixed_form}
        We say that $f$ is in \emph{fixed-point multiplier form} if $f$ has at least three distinct fixed points and all fixed points are equal to their multipliers.
        We say that $f$ is in \emph{partial fixed-point multiplier form} if either
        \begin{enumerate}
            \item $f$ has two or less fixed points and all fixed points are equal to their multipliers.
            \item $f$ has three or more fixed points and at least three of the fixed points are equal to their multipliers.

        \end{enumerate}
    \end{defn}

    \begin{lem} \label{lem_4_distinct}
        Let $K$ be a perfect field. Let $x_1, x_2, x_3, x_4 \in \overline{K} \smallsetminus \{1\}$ form a $\Gal(\overline{K}/K)$ invariant set and satisfy
        \begin{equation*}
            \sum_{i=1}^4 \frac{1}{1-x_i} = 1.
        \end{equation*}
        Then there is a unique degree $3$ rational function $f$ defined over $K$ in fixed-point multiplier form with fixed points $\Fix(f) = \{x_1,x_2,x_3,x_4\}$. Furthermore, every degree 3 rational function defined over $K$ with $4$ distinct fixed points in fixed-point multiplier form can be obtained in this way.
    \end{lem}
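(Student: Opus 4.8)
The plan is to build $f$ directly from its prescribed fixed-point data and then pin down uniqueness and the field of definition by Galois descent. Writing $f = P/Q$ with $\deg P \le 3$ and $\deg Q = 3$, I note that since all four fixed points $x_i$ are finite, $\infty$ is not fixed, so the fixed-point polynomial $g(x) := P(x) - xQ(x)$ has degree exactly $4$. Normalizing the leading coefficient of $Q$ to a chosen nonzero constant $q_3$ then forces $g(x) = -q_3 \prod_{i=1}^4 (x - x_i)$, because the degree-$4$ terms of $xQ$ and $g$ must cancel for $\deg P \le 3$; equivalently $P = xQ + g$.

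Next I would record the multiplier at a finite fixed point. A direct computation from $f' = (P'Q - PQ')/Q^2$ together with $P(x_i) = x_i Q(x_i)$ and $g' = P' - Q - xQ'$ gives $f'(x_i) = 1 + g'(x_i)/Q(x_i)$. Since $g'(x_i) = -q_3 \prod_{j \ne i}(x_i - x_j)$, the requirement $f'(x_i) = x_i$ becomes the interpolation system
\[
Q(x_i) = \frac{-q_3 \prod_{j \ne i}(x_i - x_j)}{x_i - 1}, \qquad i = 1,2,3,4,
\]
which is well defined precisely because each $x_i \ne 1$. As the $x_i$ are distinct, Lagrange interpolation yields a unique polynomial $Q$ of degree $\le 3$ meeting these four conditions.

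The crux of the argument — and the step I expect to carry the real content — is matching the leading coefficient of the interpolant against the normalization $q_3$. The leading coefficient equals the divided difference
\[
\sum_i \frac{Q(x_i)}{\prod_{j \ne i}(x_i - x_j)} = \sum_i \frac{-q_3}{x_i - 1} = q_3 \sum_i \frac{1}{1 - x_i},
\]
so $Q$ has leading coefficient $q_3$ (hence $\deg Q = 3$ and the construction closes up) if and only if $\sum_i 1/(1-x_i) = 1$. This exhibits the hypothesis not as an external assumption but as the self-consistency condition making the construction work. With $Q$ in hand I would set $P = xQ + g$, check coprimality (a common root would force $g(r)=0$, so $r = x_i$, but $Q(x_i) \ne 0$ as the $x_i$ are distinct and $\ne 1$), conclude $\deg f = 3$, and verify $f(x_i) = x_i$ and $f'(x_i) = x_i$ by substitution, so $f$ is in fixed-point multiplier form.

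Finally, for the field of definition I would invoke Galois descent: for $\sigma \in \Gal(\overline{K}/K)$ the induced permutation of $\{x_1,x_2,x_3,x_4\}$ permutes the interpolation data compatibly, so $Q^\sigma$ satisfies the same conditions and hence equals $Q$ by uniqueness; thus $Q, P \in K[x]$ and $f$ is defined over $K$. Uniqueness of $f$ follows by reversing the argument: any degree-$3$ map in fixed-point multiplier form with these fixed points has, after scaling its representation to the chosen $q_3$, the same $g$ and the same forced values $Q(x_i)$, hence the same $Q$ and $P$. For the ``furthermore'' clause I would observe that for such an $f$ the four distinct fixed points are simple roots of $g$, so $g'(x_i) = (\lambda_i - 1)Q(x_i) \ne 0$ gives $\lambda_i \ne 1$, i.e.\ $x_i \ne 1$; the classical fixed-point index relation $\sum_i 1/(1-\lambda_i) = 1$ (valid since no multiplier is $1$) combined with $\lambda_i = x_i$ recovers $\sum_i 1/(1-x_i) = 1$, so every such $f$ indeed arises from a configuration satisfying the hypotheses.
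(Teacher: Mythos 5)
Your proposal is correct, and at bottom it is the same construction as the paper's: the paper writes $F(z)=z-\frac{a\prod_i(z-x_i)}{q(z)}$ and imposes $F'(x_i)=x_i$, which yields the linear conditions $(1-x_i)q(x_i)=p'(x_i)$ --- exactly your interpolation conditions $Q(x_i)=q_3\prod_{j\neq i}(x_i-x_j)/(1-x_i)$ after identifying $q_3$ with $a$ and $g$ with $-p$. The difference is in how unique solvability is established. The paper treats this as a $4\times 4$ homogeneous linear system in the coefficients of $q$ and cites Hutz--Tepper for the existence of a unique solution; as written, the hypothesis $\sum_i 1/(1-x_i)=1$ appears only in the ``furthermore'' direction, and the reader must take on faith that the homogeneous system admits a nontrivial solution at all. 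Your route resolves the system explicitly by Lagrange interpolation and observes that the leading coefficient of the interpolant is the divided difference $\sum_i Q(x_i)/\prod_{j\neq i}(x_i-x_j)=q_3\sum_i 1/(1-x_i)$, so the index relation is precisely the self-consistency condition closing the construction. This is more self-contained (no external citation for the key step) and more illuminating about where the hypothesis is used; the paper's version is shorter and leans on a result it needs anyway for Theorem~\ref{thm:2}. Your remaining steps --- coprimality of $P$ and $Q$, Galois descent via uniqueness of the interpolant, and the ``furthermore'' clause via simplicity of the roots of $g$ and the classical relation $\sum_i 1/(1-\lambda_i)=1$ --- all check out and match the paper's treatment.
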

    We adapt the proof of Hutz-Tepper \cite[Theorem 6]{Hutz10}, which proves that specifying the four fixed points and their multipliers uniquely determines a degree $3$ rational function. However, the statement in the cited paper only assumes the multipliers are defined as a set, whereas their proof requires knowing which multiplier is attached to which fixed point.
    \begin{proof}
         The values $x_1,x_2,x_3,x_4$ are affine, so the point at infinity is not a fixed point. We dehomogenize and denote  the rational map as $F$. We can write $F$ in the form
         \begin{equation*}
             F(z) = z - \frac{p(z)}{q(z) } = z - \frac{a\prod_{i=1}^4 (z-x_i)}{az^3 + bz^2 + cz + d}.
         \end{equation*}
         Computing
         \begin{equation*}
             F'(z) = 1 - \frac{p'(z)}{q(z)} - \frac{p(z)q'(z)}{q(z)^2}
         \end{equation*}
         and evaluating at the fixed points $\{x_1,x_2,x_3,x_4\}$, we need to have
         \begin{equation*}
             x_i = F'(x_i) = 1 - \frac{p'(x_i)}{q(x_i)}, \quad i=1,2,3,4.
         \end{equation*}
         This gives the system of equations (linear in $a,b,c,d$) defined by
         \begin{equation}\label{eq_1}
             (1-x_i)q(x_i) - p'(x_i) = 0, \quad i=1,2,3,4.
         \end{equation}
         As in Hutz-Tepper, this system has a unique solution for $(a,b,c,d)$ and, hence, for $F$. To see that this solution is defined over $K$ recall that we have assumed $\{x_1,x_2,x_3,x_4\}$ is $\Gal(\overline{K}/K)$ invariant. In particular, applying any element of $\Gal(\overline{K}/K)$ to the system \eqref{eq_1} fixes the system. In other words, elements of $\Gal(\overline{K}/K)$ fix the solution $(a,b,c,d)$. As $K$ is perfect, the fixed field of $\Gal(\overline{K}/K)$ is $K$, and thus the solution $(a,b,c,d)$ and $f$ are defined over $K$.

         For the final statement, assume $f$ is in fixed-point multiplier form defined over $K$ with $4$ distinct fixed points $\Fix(f) = \{x_1,x_2,x_3,x_4\}$. Then the first dynatomic polynomial \cite[Section 4.1]{Silverman} is defined over $K$ so that $\Fix(f)$ is $\Gal(\overline{K}/K)$ invariant. Furthermore, since the fixed points are equal to their multipliers, they must satisfy the classical relation \cite[Theorem 1.14]{Silverman}
         \begin{equation*}
             \sum_{i=1}^4 \frac{1}{1-x_i} = 1. \qedhere
         \end{equation*}
    \end{proof}

    \begin{proof}[Proof of Theorem \ref{thm:2}]
        We first recall that the map $f$ is defined over $K$ if and only if $\sigma(f)=f$ for all $\sigma \in \Gal(\overline{K}/K)$. (\cite[Exercise 1.12]{silverman-AEC}).

        Condition \eqref{cond_1} avoids the trivial obstruction of there not being enough distinct values for up to three distinct fixed points to be equal to their multipliers. So we focus on condition \eqref{cond_2}.

        Throughout the proof we may assume without loss of generality that the point at infinity is not a fixed point. We dehomogenize and denote the fixed points as $x_i$, $1 \leq i \leq \#\Fix(f)$ and the dehomogenized rational map as $F$.

        We first prove the conditions imply the existence of the desired form. Assume first that condition \eqref{cond_1} and \eqref{cond_2a} hold. Let $k = \min(\#\Fix(f),3)$. If $k = \#\Fix(f)$, then we may add arbitrary distinct rational points to the set of fixed points so that we have a Galois invariant set $\{z_1,z_2,z_3\}$ of distinct points that we wish to move via conjugation. The target set $\{t_1,t_2,t_3\}$ is the set of fixed point multipliers plus an arbitrary choice of distinct rational points. There is a unique $\rho\in\PGL_2(\overline{K})$ so that $f^{\rho}$ satisfies
        \begin{align*}
            F^{\rho}(\rho^{-1}(x_i)) &= \rho^{-1}(x_i) = F'(\rho^{-1}(x_i)), \quad 1 \leq i \leq \#\Fix(f)\\
            \rho^{-1}(z_i) &= t_i, \quad \#\Fix(f) < i \leq 3.
        \end{align*}
        We will see that $\rho \in \PGL_2(K)$ so that $f^{\rho}$ is defined over $K$. Write
        \begin{equation*}
            \rho = \begin{pmatrix}a&b\\c&d\end{pmatrix}.
        \end{equation*}
        Then we need to solve the system of equations
        \begin{equation*}
            at_i + b = z_i(ct_i+d), \quad 1 \leq i \leq 3.
        \end{equation*}
        Note that when $z_i$ is the fixed point $x_i$, then $t_i = F'(x_i)$.
        We know that (up to scaling) there is a unique solution $(a,b,c,d)$. Since the set of fixed points is Galois invariant and any additional points are rational, applying any element of the absolute Galois group to this system of equations leaves the system fixed and, thus, its solutions unchanged. Therefore, $(a,b,c,d)$ are defined over $K$ as $K$ is perfect.

        Assume now conditions \eqref{cond_1} and \eqref{cond_2b} are satisfied and condition \eqref{cond_2a} is not. Then $f$ has 4 distinct fixed points, of which there is no Galois invariant subset of size 3, and for which there are at least 3 distinct multipliers. Let $\alpha \in \PGL_2$ be an order $2$ automorphism of $f$. The automorphism $\alpha$ must exchange two fixed points with the same multiplier and fix the other fixed points (since they must have distinct multipliers in this case). After possible reordering the $x_i$, we know that $\alpha(x_i) = x_i$ for $i=1,2$ and $\alpha(x_3) = x_4$. Let $\rho \in \PGL_2(\overline{K})$ such that $\rho^{-1}(x_i) = F'(x_i)$ for $i=1,2,3$.
        Let $\sigma \in \Gal(\overline{K}/K)$. Then $\sigma$ fixes the points $\{\rho^{-1}(x_1),\rho^{-1}(x_2),\rho^{-1}(x_3),\rho^{-1}(x_4)\}$ or fixes $\{\rho^{-1}(x_1),\rho^{-1}(x_2)\}$ and exchanges $\rho^{-1}(x_3)$ and $\rho^{-1}(x_4)$. In particular, $f^{\rho}$ and $\sigma(f^{\rho})$ are degree $3$ rational functions with $4$ distinct fixed points with the same set of associated multipliers. So by (the corrected) Hutz-Tepper \cite[Theorem 6]{Hutz10} we have $\sigma(f^{\rho}) = f^{\rho}$. In particular, $f^{\rho}$ is defined over $K$.

        Assume now conditions \eqref{cond_1} and \eqref{cond_2c} are satisfied and conditions \eqref{cond_2a} and \eqref{cond_2b} are not. Then there are four distinct multipliers and we are in the case of Lemma \ref{lem_4_distinct}.

        For the other direction, without loss of generality assume that $f$ is in partial fixed-point multiplier form and defined over $K$. Since $f$ is defined over $K$, the fixed point equation (first dynatomic polynomial) is defined over $K$ and is Galois invariant, and thus $\Fix(f)$ is Galois invariant. If $f$ does not have a set of $\min(\#\Fix(f),3)$ distinct fixed points that are Galois invariant, then we must have $\#\Fix(f) > 3$, which implies $\#\Fix(f) = 4$ since $\#\Fix(f) \leq 4$ always. Let $\Fix(f) = \{x_1, x_2, x_3,x_4\}$. As $f$ is in partial fixed-point multiplier form, we can assume without loss of generality that $x_1$, $x_2$, and $x_3$ equal their multipliers. For the 4th fixed point $x_4$ there must be a $\sigma \in \Gal(\overline{K}/K)$ so that $\sigma(x_i) = x_4$ for some $i \in \{1,2,3\}$, say $\sigma(x_3) = x_4$. First note that if $f$ has a nontrivial automorphism $\alpha$ it must be order two. Recall that automorphisms map fixed points to fixed points, and further can only map fixed points to fixed points with the same multiplier. As each $x_i$ is distinct, $x_1$, $x_2$, and $x_3$ all have distinct multipliers, while the multiplier for $x_4$ must equal the multiplier for $x_3$. Any automorphism of $f$ must thus fix $x_1$ and $x_2$, while either fixing $x_3$ or mapping $x_3$ to $x_4$, which completely determines all automorphisms of $f$. If $f$ has a nontrivial automorphism $\alpha \in \PGL_2$, defined by $\alpha(x_1)=x_1$, $\alpha(x_3) = x_4$, and $\alpha(x_4) = x_3$, we are in condition \eqref{cond_2b}.
         Otherwise, we calculate
        \begin{align*}
            x_4 &= \sigma(x_3)\\
                &= \sigma(F'(x_3)) \quad \text{(because $f$ is in partial fixed-point multiplier form with}\\
                & \hspace*{90pt} \text{$x_i=F(x_i) = F'(x_i)$ for $i=1,2,3$)}\\
                &= F'(\sigma(x_3)) \quad \text{(because $F$ is defined over $K$)}\\
                &= F'(x_4)
        \end{align*}
        so that $f$ is, in fact, in fixed-point multiplier form (not just partial fixed point-multiplier form) and we are in condition \eqref{cond_2c}.
    \end{proof}
    We exhibit examples of each of the conditions (\ref{cond_2a}, \ref{cond_2b}, \ref{cond_2c}). Note that \eqref{cond_2b} is mutually exclusive to \eqref{cond_2c} since we cannot have an automorphism if the multipliers are all distinct.
    \begin{itemize}
        \item[\eqref{cond_2a}] We give examples with one, two, and three distinct fixed points.
        \begin{itemize}
            \item $F(z) = \frac{5z^3 - 1}{z^3 + z^2 + 6z - 4}$ has one fixed point, 1, and one multiplier, 1, and thus is in partial fixed-point multiplier form.
            \item $F(z) = \frac{z^3}{ -z^3 + 4z^2 - 3z + 1}$ with two distinct fixed points fixed points $\{0, 1\}$ and multipliers $\{0, 1\}$ is in partial fixed-point multiplier form.
            \item $F(z) = \frac{z^3}{-2z^3 + 9z^2 - 10z + 4}$ with three distinct fixed points $\{0,1,2\}$ and multipliers $\{0,1,2\}$ is in fixed-point multiplier form.
        \end{itemize}

        \item[\eqref{cond_2b}] $F(z) = \frac{18z^3}{-11z^3 + 57z^2 + 75z + 25}$ in partial fixed-point multiplier form with fixed points $\{-1,5,0,-5/11\}$ and respective multipliers $\{-1,5,0,5\}$ and a nontrivial automorphism group of order $2$ generated by $\begin{pmatrix}-1&0\\2&1 \end{pmatrix}$.
        \item[\eqref{cond_2c}] $F(z) = \frac{-3z^3 + z^2 - 2z - 2}{z^3 - 5z^2 + 4z - 4}$ with fixed points $\{-i,i,1-i,1+i\}$ in fixed-point multiplier form.
    \end{itemize}

\section{The First Dynatomic Field}\label{first_dynatomic}

   Let $L$ be the field of definition of the fixed points of $f$, called the \emph{first dynatomic field}. We call the Galois group $\Gal(L/K)$ the \emph{first dynatomic Galois group}. In what follows, $S_n$ denotes the symmetric group on $n$ letters, while $C_n$ denotes the cyclic group on $n$ letters. In the proof of Theorem~\ref{thm:2}, the main idea is to use the existence of a Galois invariant subset that contains three fixed points, so it is no surprise that if $f$ has a conjugate in partial fixed-point multiplier form, then we gain some control on the first dynatomic Galois group of $f$.

   We first state a lemma which is essentially an explicit version of Theorem \ref{thm:2}(\ref{cond_2a}). In particular, finding a $\rho$ so that $f^{\rho}$ is in partial fixed-point multiplier form is equivalent to choosing three fixed points with distinct multipliers.
   \begin{lem}\label{lem:possible mobius transforms}
       Let $f$ be a degree 3 rational map defined over a perfect field $K$ with $\# \text{Fix}(f) \geq 3$. Let $\Fix(f) = \{x_1, x_2, x_3, x_4\}$ where possibly $x_3 = x_4$, and let $t_i = f'(x_i)$ for $1 \leq i \leq 4$.

       An element $\rho \in \PGL_2(\overline{K})$ such that $f^\rho$ is in partial fixed-point multiplier form is determined by choosing a subset of $3$ distinct points $\{x_{i_1}, x_{i_2}, x_{i_3}\}$ of $\text{Fix}(f)$ with distinct multipliers. Furthermore, if the set $\{x_{i_1}, x_{i_2}, x_{i_3}\}$ is invariant under the first dynatomic Galois group, then $\rho$ is defined over $K$.
   \end{lem}
   \begin{proof}
    We have that $\Fix(f^\rho) = \{\rho^{-1}(x_i) : i=1,2,3,4\}$. Recall that the multipliers (as a set) are invariant under conjugation, and thus the set of multipliers of $f^\rho$ is $\{t_1,t_2,t_3,t_4\}$. As $f^\rho$ is in partial fixed-point multiplier form, there is some subset $\{\rho^{-1}(x_{i_1}), \rho^{-1}(x_{i_2}), \rho^{-1}(x_{i_3})\}$ such that
   \begin{align*}
       \rho^{-1}(x_{i_l}) = t_{j_l}, \quad 1 \leq l \leq 3
   \end{align*}
   for some subset $\{t_{j_1}, t_{j_2}, t_{j_3}\}$ of the multipliers. The existence of a solution (or rational solution) to this system is demonstrated in the proof of Theorem \ref{thm:2}(\ref{cond_2a}).
   \end{proof}
   Note that since an element of $\PGL_2$ is uniquely determined by what it does to three distinct points, the $\PGL_2$ elements obtained from two different subsets in Lemma \ref{lem:possible mobius transforms} can only be the same when $f$ has four distinct fixed points and $f^{\rho}$ is in fixed-point multiplier form.

   We are also able to count how many different conjugates of $f$ are in partial fixed-point multiplier form and defined over $K$, as in Corollary \ref{cor:1}.
    \begin{cor}\label{cor:1}
    Let $f$ be a rational map of degree 3 defined over a perfect field $K$. Assume $\Aut(f)$ is trivial. Suppose that $\rho\in \PGL_2(\overline{K})$ is such that $f^{\rho}$ is in partial fixed-point multiplier form and defined over $K$. Then, $\rho$ is defined over $K$. Further, we can count the number of such $\rho$:
    \begin{enumerate}
        \item $\rho$ is unique if and only if $f^{\rho}$ is in fixed-point multiplier form or the first dynatomic Galois group is isomorphic to $S_3$ or $C_3$.
        \item There are exactly two distinct $\rho\in\PGL_2(K)$ such that $f^{\rho}$ is in partial fixed-point multiplier form if and only if $f$ has four fixed points, and one of the following is true:
        \begin{enumerate}
            \item the first dynatomic Galois group is $C_2$; \label{cor_1a}
            \item $f$ has two distinct fixed points with the same multiplier. \label{cor_1b}
        \end{enumerate}
        \item There are exactly four distinct $\rho\in\PGL_2(K)$ such that $f^{\rho}$ is in partial fixed-point multiplier form if and only if the first dynatomic Galois group is trivial and $f$ has four fixed points.
    \end{enumerate}
    \end{cor}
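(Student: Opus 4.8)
The plan is to reduce the whole statement to a count of the ways a Möbius transformation can carry fixed points of $f$ to their own multipliers, and then to read that count off from the action of the first dynatomic Galois group $\Gal(L/K)$ on $\Fix(f)$. I would first settle the rationality claim by Galois descent. Suppose $f^\rho$ is in partial fixed-point multiplier form and defined over $K$. For $\sigma\in\Gal(\overline K/K)$ we have $\sigma(f^\rho)=\sigma(f)^{\sigma(\rho)}=f^{\sigma(\rho)}$ since $f$ is defined over $K$; as $f^\rho$ is also defined over $K$ this gives $f^{\sigma(\rho)}=f^\rho$, so $\sigma(\rho)\rho^{-1}\in\Aut(f)=\{\mathrm{id}\}$ and hence $\sigma(\rho)=\rho$. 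Thus $\rho\in\PGL_2(K)$, and conversely any $\rho\in\PGL_2(K)$ produces a model over $K$. The same computation, together with triviality of $\Aut(f)$, shows that distinct admissible $\rho$ yield genuinely distinct models, so no overcounting can occur.

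Next I would build the correspondence behind the three counts. Since a Möbius map is pinned down by three point assignments and $F'$ is Galois-equivariant on $\Fix(f)$ (because $f$ is defined over $K$), choosing $\rho$ amounts to choosing which fixed points go to their multipliers. When $\#\Fix(f)\le 2$ the conditions underdetermine $\rho$, giving infinitely many models, so none of the numerical cases triggers; when $\#\Fix(f)=3$ the three distinct fixed points are the whole (Galois-invariant) fixed-point set and map to three necessarily distinct multipliers, yielding a single $\rho$ with $f^\rho$ in fixed-point multiplier form. The essential case is $\#\Fix(f)=4$, where I claim the admissible $\rho\in\PGL_2(K)$ correspond bijectively to the three-element subsets $S\subset\Fix(f)$ that are $\Gal(\overline K/K)$-invariant and have pairwise distinct associated multipliers, together with the single full conjugation of Lemma~\ref{lem_4_distinct} when $f$ admits a fixed-point multiplier model. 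The key point is that any two distinct admissible triples jointly send all four fixed points to their multipliers and therefore collapse, by Lemma~\ref{lem_4_distinct}, to that one full model; so away from the full-form case $S\mapsto\rho_S$ is injective.

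The count is then Galois-theoretic. A triple is Galois-invariant exactly when its omitted point is $K$-rational, so the admissible triples are indexed by the rational fixed points whose complementary triple has distinct multipliers. Enumerating the orbit shapes of $\Gal(L/K)$ on the four fixed points—a rational point plus a size-three orbit ($\Gal(L/K)\cong \ZZ_3$ or $S_3$), two rational points (a transposition), all four rational (trivial group), or no rational point (a four-cycle or double transposition)—produces, when the four multipliers are distinct, one, two, four, or zero admissible triples respectively, while the full model always contributes the unique fixed-point multiplier conjugation. Matching these to statements (1)--(3) is then bookkeeping: fixed-point multiplier form or a transitive size-three orbit gives uniqueness; a transposition gives two; the trivial group with four fixed points gives four.

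The step I expect to be the main obstacle is controlling multiplier coincidences and reconciling them with the group-theoretic language. A shared multiplier among two fixed points kills every triple containing both, so the naive count by rational fixed points must be corrected; this correction is precisely what produces case \eqref{cor_1b}, where a coincidence among two rational fixed points drops the count from four to two, and it is also what forces ``group of order $2$'' in case \eqref{cor_1a} to be pinned to the transposition (equivalently, to $f$ having exactly two rational fixed points) rather than the fixed-point-free double transposition, which would otherwise contribute zero. Both subtleties turn on the dichotomy already exploited in the proof of Theorem~\ref{thm:2}: for a degree $3$ map with four distinct fixed points, two equal multipliers place the corresponding pair in harmonic position and thereby generate an order-$2$ element of $\Aut(f)$. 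Pinning down exactly when a multiplier coincidence is compatible with the standing hypothesis $\Aut(f)=\{\mathrm{id}\}$—and hence which orbit shapes and coincidences actually occur—is the crux, and I would make it precise by establishing this harmonic/automorphism equivalence cleanly before reading off the final counts.
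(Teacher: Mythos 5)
Your overall strategy is the same as the paper's: both arguments reduce the count of admissible $\rho$ to a count of $\Gal(\overline{K}/K)$-invariant three-element subsets of $\Fix(f)$ whose multipliers are pairwise distinct (equivalently, when $\#\Fix(f)=4$, of $K$-rational fixed points), with the full fixed-point multiplier form of Lemma~\ref{lem_4_distinct} handled separately. Your version is considerably more explicit than the paper's: the descent step $\sigma(\rho)\rho^{-1}\in\Aut(f)=\{\mathrm{id}\}$ giving $\rho\in\PGL_2(K)$, and the injectivity of $S\mapsto\rho_S$ away from the full form, are both left implicit in the paper, and the paper proves only one direction of items (2) and (3).

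The genuine problem is in your final paragraph, and it is the step you yourself flag as the crux. You assert that for a degree $3$ map with four distinct fixed points, two equal multipliers ``place the corresponding pair in harmonic position and thereby generate an order-$2$ element of $\Aut(f)$.'' If that were true, case \eqref{cor_1b} would be vacuous under the standing hypothesis $\Aut(f)=\{\mathrm{id}\}$ and your count would collapse. The correct statement, via Hutz--Tepper \cite[Theorem 6]{Hutz10}, is that the swap of the two fixed points with equal multiplier is an automorphism if and only if the unique $\alpha\in\PGL_2$ fixing the other two points and sending $x_3$ to $x_4$ also sends $x_4$ to $x_3$, i.e.\ if and only if the cross-ratio of the four fixed points is harmonic. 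Case \eqref{cor_1b} is precisely the non-harmonic situation: the multiplier coincidence kills the two triples containing both points (dropping the count from four to two) without producing any automorphism. For instance, fixed points $0,1,2,3$ with multipliers $4,-2,-1,-1$ satisfy the Milnor relation, give trivial $\Aut$ and trivial dynatomic Galois group, and yield exactly two $\rho$ --- which also shows that getting this bookkeeping right is not optional, since it governs the boundary between items (2) and (3). Finally, the exclusion of the fixed-point-free double transposition from case \eqref{cor_1a} is an orbit-structure fact (no rational fixed point means no invariant triple), not a consequence of the multiplier-coincidence correction as you suggest; conflating the two obscures the argument you would actually need to write down.
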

    \begin{proof}
    Note first that as $\Aut(f)$ is trivial, we are not in case (2b) of Theorem \ref{thm:2}. We show that $\rho$ is defined over $K$. If we are in case (2a) of Theorem \ref{thm:2}, then we have a $\Gal(\overline{K}/K)$ subset of fixed points and Lemma \ref{lem:possible mobius transforms} or the proof of case (2a) shows that $\rho$ is defined over $K$. In case (2c) of Theorem \ref{thm:2}, letting $\{x_i\}$ denote the set of fixed point and $\{t_i\}$ denote the set of multipliers, we have that $\rho$ is a solution to the set of equations
    \begin{align*}
        \rho^{-1}(x_i) = t_i, \quad 1 \leq i \leq 4.
    \end{align*}
    As the set of fixed points is Galois invariant, the above set of equations is Galois invariant so that $\rho$ is defined over $K$.
    \hfill
    \begin{enumerate}
        \item Suppose that there is a unique $\rho\in\PGL_2(\overline{K})$ such that $f^{\rho}$ is in partial fixed-point multiplier form defined over $K$. If $f^{\rho}$ is in fixed-point multiplier form, then we are done. Otherwise, we are in case (2a) of Theorem~\ref{thm:2}. In the proof of case (2a), we add an arbitrary rational point to find $\rho$ if $f$ has less than three fixed points. Thus, in case (2a), if $f$ has less than three fixed points, $\rho$ is not unique. There must therefore exist a Galois invariant set that contains three fixed points of $f$. This Galois invariant set is unique since, by hypothesis, there is a unique $\rho$, and each $\rho$ arises from a Galois invariant set of three fixed points by Lemma \ref{lem:possible mobius transforms}. If $f$ has only three fixed points, then $f^{\rho}$ is in fixed-point multiplier form. Thus, we assume $f$ has four fixed points for the following argument. Obviously, the fixed point not in the Galois invariant set must be defined over $K$. We have thus shown that the first dynatomic field is the splitting field of a cubic, and hence the order of the Galois group must divide six. We now show that the first dynatomic Galois group is not trivial and not $C_2$. If the first dynatomic Galois group is trivial, then every fixed point is rational, which violates the existence of a unique Galois invariant set of size three. Similarly, the first dynatomic Galois group cannot be isomorphic to $C_2$; otherwise, it violates the unique
        existence of a Galois invariant set of three fixed points. Conversely, if $f$ has a conjugate in fixed-point multiplier form, then the conjugate is obviously unique. If the first dynatomic Galois group is isomorphic to $C_3$ or $S_3$, then we can quickly see that $\rho$ is unique as there is a unique set of three fixed points which is Galois invariant.
        \item We begin with the converse direction. Assume first that $f$ has four fixed points, and the first dynatomic Galois group is $C_2$. To construct a $\rho$ such that $f^\rho$ is in partial fixed-point multiplier form, by Lemma \ref{lem:possible mobius transforms} it is sufficient to choose a Galois invariant set of three fixed points which all have distinct multipliers. Let $\sigma$ be the generator of the first dynatomic Galois group, and let $x_1$ and $x_2$ be fixed points such that $\sigma(x_1) = x_2$. Then, there are two possible Galois invariant sets of fixed points of size three, namely $\{x_1, x_2, x_3\}$ and $\{x_1, x_2, x_4\}$, and hence exactly two choices for $\rho$. Similar logic applies when $f$ has two distinct fixed points with the same multiplier: to construct a set of three fixed points with all distinct multipliers, we must pick one of the fixed points with the same multiplier, giving two choices for such a set, and hence two choices for $\rho$.

        Now we show that the existence of two such $\rho$ implies the conditions. Since we assume $\Aut(f)$ is trivial, Theorem~\ref{thm:2} implies there must exist a Galois invariant set of at least three fixed points. Since there are two choices of $\rho$, there are two Galois invariant sets of three fixed points. If condition \eqref{cor_1b} holds, i.e. $f$ has two distinct fixed points with the same multiplier, then we are done. Otherwise, there are four distinct multipliers. If the dynatomic Galois group were trivial, then there would be four Galois invariant sets of three fixed points, and, hence, four distinct such M\"{o}bius transformations $\rho$ (Lemma \ref{lem:possible mobius transforms}). Since there are only two such $\rho$ by assumption, the dynamical Galois group cannot be trivial. The only remaining possibility is that the dynamical Galois group is order two and, hence, isomorphic to $C_2$.

        \item This is the remaining case. If $f$ has at most three distinct fixed points, then $f^{\rho}$ would be in fixed-point multiplier form, which is covered in part (1). Thus, we can assume $f$ has four distinct fixed points. Each $\rho$ corresponds to a Galois invariant subset of three fixed points of $f$ by Lemma \ref{lem:possible mobius transforms}. The cases of one distinct and two distinct $\rho$ are covered in (1) and (2), since there are at most four size three subsets of a four element set,  we need to show there cannot be three distinct $\rho$. If there are 3 distinct $\rho$, then there are 3 such distinct subsets and all fixed points are rational. But if all fixed points are rational, we have four distinct $\rho$s by choosing arbitrary 3 fixed points among 4. Finally, none of $f^\rho$ are in fixed-point multiplier form or otherwise we are in (1). \qedhere
    \end{enumerate}
    \end{proof}

    \begin{cor}\label{cor:2}
     Let $f$ be a degree 3 rational map defined over a perfect field $K$, and let $L=K(f(x)-x)$ be the first dynatomic field of $f$. Assume $\Aut(f)$ is of order $2$. Then, there exists $\rho\in \PGL_2(\overline{K})$ such that $f^{\rho}$ is in partial fixed-point multiplier form if and only if both of the following hold:
     \begin{enumerate}
       \item The first dynatomic group $\Gal(L/K)$ is isomorphic to the trivial group, $C_2$, or $C_2\times C_2$.
       \item $f$ has four fixed points.
     \end{enumerate}
     Furthermore, there always exist exactly two distinct such $\rho$.
    \end{cor}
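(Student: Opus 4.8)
The plan is to deduce this from Theorem~\ref{thm:2} by tracking how the order-$2$ automorphism acts on the four fixed points. First I would record that, since $\Aut(f)$ is a group of order $2$ and $f$ is defined over $K$, the Galois action permutes $\Aut(f)$ and hence fixes its unique nontrivial element; thus the involution $\alpha$ is defined over $K$. As an involution in $\PGL_2$, $\alpha$ has exactly two fixed points on $\PP^1$, so when $f$ has four distinct fixed points $x_1,x_2,x_3,x_4$ the induced permutation of $\Fix(f)$ is either a single transposition fixing two of them (the \emph{split} case) or a product of two transpositions fixing none of them (the \emph{free} case). Because $\alpha$ is defined over $K$, the Galois action on $\Fix(f)$ commutes with $\alpha$, so $\Gal(L/K)$ embeds into the centralizer of $\alpha$ in $S_4$: this centralizer is $\ZZ_2\times\ZZ_2$ in the split case and the dihedral group of order $8$ in the free case. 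This embedding is exactly the source of the group-theoretic constraint appearing in the statement.

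For the forward implication I would argue as follows. If $f^{\rho}$ is in partial fixed-point multiplier form and defined over $K$, then by condition \eqref{cond_1} of Theorem~\ref{thm:2} the map $f$ has at least three distinct fixed-point multipliers. Since fixed points in a common $\alpha$-orbit share a multiplier, the free case yields at most two distinct multipliers and is therefore excluded, placing us in the split case. There the centralizer is $\ZZ_2\times\ZZ_2$, so $\Gal(L/K)$ is isomorphic to the trivial group, $\ZZ_2$, or $\ZZ_2\times\ZZ_2$, and $f$ has four fixed points, as claimed. To produce the two conjugations, I would note that in the split case $\alpha$ fixes $x_1,x_2$ and swaps $x_3,x_4$, with $\lambda_3=\lambda_4$; the construction used for condition~\eqref{cond_2b} in the proof of Theorem~\ref{thm:2} applies verbatim, and the freedom of sending either $x_3$ or $x_4$ to the common multiplier $\lambda_3$ yields two distinct $\rho$, each giving a map in the form defined over $K$ by the corrected Hutz--Tepper uniqueness invoked there. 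The Galois-stable-selection bookkeeping carried out for trivial $\Aut(f)$ in Corollary~\ref{cor:1} then transfers to show these are all of them.

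The reverse implication is where I expect the genuine difficulty, and in fact I expect it to require more than the stated hypothesis. The natural strategy is to show that the conditions force the split case, after which the construction above supplies the two $\rho$. The obstacle is that the Galois condition alone does \emph{not} exclude the free case: there $\Gal(L/K)$ can still be trivial, $\ZZ_2$, or $\ZZ_2\times\ZZ_2$ (these are all subgroups of the dihedral centralizer of that isomorphism type), while the two $\alpha$-orbits force at most two distinct multipliers, so condition \eqref{cond_1} fails and no conjugate in partial fixed-point multiplier form exists. A concrete instance is $f(z)=-4/(z^3-5z)$ over $\QQ$, which has $\Aut(f)=\{1,\ z\mapsto -z\}$ of order $2$, four rational fixed points $\{\pm1,\pm2\}$ (so $\Gal(L/\QQ)$ is trivial), yet only the two multipliers $-\tfrac{1}{2}$ and $7$. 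Hence the main obstacle is separating the split case from the free case, something $\Gal(L/K)$ cannot do on its own.

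The clean remedy, which I would build into the argument, is to supplement the hypotheses with Theorem~\ref{thm:2} condition \eqref{cond_1}, that is, to require three distinct fixed-point multipliers (equivalently, that the two fixed points of $\alpha$ lie in $\Fix(f)$ with distinct multipliers, forcing the split case). With that extra hypothesis the split case is guaranteed, the centralizer computation gives $\Gal(L/K)\in\{1,\ZZ_2,\ZZ_2\times\ZZ_2\}$ automatically, and the two-selection construction yields exactly two $\rho$; the degenerate possibilities where $f$ has fewer than four distinct fixed points are then excluded by the same multiplier count, consistent with the four-fixed-point requirement in the statement. I would therefore present the forward direction and the count essentially as above, and state the reverse direction only under the corrected hypothesis, flagging the multiplier condition as the indispensable ingredient that the first dynatomic Galois group cannot encode by itself.
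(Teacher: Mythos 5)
Your forward direction and your count of the two conjugations follow essentially the same route as the paper. The paper's (two-sentence) proof deduces from condition \eqref{cond_2b} of Theorem \ref{thm:2} that each Galois element acts on $\Fix(f)$ by possibly swapping $x_1\leftrightarrow x_2$ and/or $x_3\leftrightarrow x_4$, so that $\Gal(L/K)$ embeds into $\ZZ_2\times\ZZ_2$, and then observes that with only three distinct multipliers there are exactly two admissible triples of fixed points, hence two choices of $\rho$. Your centralizer-of-$\alpha$ formulation is a cleaner packaging of the same argument, and your observation that $\alpha$ is automatically defined over $K$ (Galois fixes the unique nontrivial element of $\Aut(f)$) is the justification the paper leaves implicit.

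The substantive point is your critique of the reverse implication, and you are right: the paper's proof simply does not address it, and the clause ``since there are only three distinct multipliers'' silently assumes condition \eqref{cond_1} of Theorem \ref{thm:2}, which is exactly the hypothesis missing from the statement (and which the authors \emph{do} include in Corollary \ref{cor:3}). Your example $f(z)=-4/(z^3-5z)$ checks out: $f(-z)=-f(z)$ and no other M\"{o}bius transformation preserves both multiplier classes of fixed points, so $\Aut(f)$ has order $2$; the fixed points are $\pm 1,\pm 2$, all rational, so $\Gal(L/\QQ)$ is trivial; the multipliers are $-1/2,-1/2,7,7$; and a conjugate in partial fixed-point multiplier form would require three distinct fixed points equal to values drawn from a two-element multiplier set, which is impossible. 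So the stated conditions do not suffice, and the repair you propose --- adding the three-distinct-multiplier hypothesis, which forces the automorphism to act on $\Fix(f)$ as a single transposition and hence forces four distinct fixed points --- is the right one. (The same defect infects the ``only if'' direction as literally written: $z^3+z$ has automorphism group of order $2$, only two fixed points, and a rational conjugate satisfying case (1) of the definition of partial fixed-point multiplier form, yet does not have four fixed points; your closing remark about the degenerate cases already absorbs this once the multiplier hypothesis is added.) In short, your proposal is a correct and more careful account of what the paper proves, together with a valid identification of a hypothesis the corollary needs but omits.
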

    \begin{proof}
    Assume there is a $\rho \in \PGL_2(\overline{K})$ so that $f^{\rho}$ is in partial fixed-point multiplier form. Since $\Aut(f)$ is non-trivial, there are two distinct fixed points with the same multiplier. In particular, $f$ cannot be in fixed-point multiplier form. Since $f^{\rho}$ is in partial fixed-point multiplier form, we must then have 4 distinct fixed points. Finally, there are exactly two distinct sets of three fixed points and $f^{\rho}$ is not in fixed-point multiplier form, so by Lemma \ref{lem:possible mobius transforms} there are exactly two M\"{o}bius transformations $\rho$ conjugating $f$ to a partial fixed-point multiplier form.

    Now assume the two conditions hold. In particular, $f$ has four distinct fixed points and the first dynatomic Galois group is a subgroup of $C_2 \times C_2$. Label the four fixed points as $x_1$, $x_2$, $x_3$, and $x_4$. Then, after possibly relabeling, any element of the Galois group swaps $x_1$ and $x_2$ or $x_3$ and $x_4$ and the nontrivial automorphism swaps $x_3$ and $x_4$. In particular, there are exactly two sets of three distinct fixed points with distinct multipliers: $\{x_1,x_2,x_3\}$ and $\{x_1,x_2,x_4\}$. By Lemma \ref{lem:possible mobius transforms} these sets corresponds to distinct $\rho$ for which $f^{\rho}$ is in partial fixed-point multiplier form. Note that these $\rho$ are distinct since $f$ cannot be in fixed-point multiplier form with four distinct fixed points and only three distinct multiplier values.
    \end{proof}
    Corollary~\ref{cor:3} provides a convenient method to determine whether a conjugate of a rational map for partial fixed-multiplier form exists.
    \begin{cor}\label{cor:3}
        Let $f$ be a degree three rational map written as $\frac{p(x)}{q(x)}$ for some polynomials $p$ and $q$ defined over a perfect field $K$. We further suppose that $f$ has at least three distinct fixed point multipliers. Then, there exists some $\rho\in\PGL_2(\overline{K})$ such that $f^{\rho}$ is in partial fixed-point multiplier form if and only if at least one of the following is true.
        \begin{enumerate}
            \item The fixed points for some conjugate of $f$ satisfies the hypotheses in Lemma~\ref{lem_4_distinct}.\label{cor_1}
            \item The degree of $q(x)$ is equal to or less than $2$.\label{cor_2}
            \item $p(x)-xq(x)$ is of degree $4$ and has at least one zero in $K$ with $\text{char}(K)\neq 2$.\label{cor_3}
            \item $f$ has a unique nontrivial automorphism.\label{cor_4}
        \end{enumerate}
    \end{cor}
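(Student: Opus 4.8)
The plan is to reduce everything to Theorem~\ref{thm:2}. The standing hypothesis that $f$ has at least three distinct fixed-point multipliers is exactly condition \eqref{cond_1} of that theorem, and since distinct multipliers force distinct fixed points we automatically have $3 \le \#\Fix(f) \le 4$, so $\min(\#\Fix(f),3)=3$ throughout. Hence the existence of $\rho\in\PGL_2(\overline K)$ with $f^{\rho}$ in partial fixed-point multiplier form over $K$ is equivalent to the disjunction \eqref{cond_2a} $\vee$ \eqref{cond_2b} $\vee$ \eqref{cond_2c}, and it suffices to show this is equivalent to \eqref{cor_1} $\vee$ \eqref{cor_2} $\vee$ \eqref{cor_3} $\vee$ \eqref{cor_4}. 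Two matchings are essentially definitional. First, \eqref{cond_2c} $\Leftrightarrow$ \eqref{cor_1}: by the final statement of Lemma~\ref{lem_4_distinct}, every degree $3$ map over $K$ with four distinct fixed points in fixed-point multiplier form is one of those produced by the Lemma, so "$f^{\rho}$ is one of the maps in Lemma~\ref{lem_4_distinct}" is the same as "some conjugate passes the Lemma's criteria." Second, \eqref{cond_2b} $\Leftrightarrow$ \eqref{cor_4}: a group of order $2$ has a unique nontrivial element, and I would note the Galois clause of \eqref{cond_2b} is automatic here—when $f$ has an order-$2$ automorphism it must interchange two fixed points of equal multiplier, forcing exactly three distinct multipliers, so the interchanged pair is the unique repeated one and is therefore $\Gal(\overline K/K)$-stable. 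This is precisely the content extracted in Corollary~\ref{cor:2}.

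The substance is the equivalence \eqref{cond_2a} $\Leftrightarrow$ \eqref{cor_2} $\vee$ \eqref{cor_3}. First I would set up the dictionary between the shape of $p/q$ and the fixed points. Writing $f$ reduced, degree $3$ forces $\max(\deg p,\deg q)=3$; the point at infinity is fixed exactly when $\deg p>\deg q$, i.e.\ when $\deg q\le 2$ (condition \eqref{cor_2}), and otherwise $\deg q=3$, in which case $p(x)-xq(x)$ has degree $4$ and its roots are exactly the affine fixed points of $f$ (any common root of $p,q$ is excluded by coprimality, so every root $r$ genuinely satisfies $f(r)=r$). Next I would record the Galois-theoretic observation that $\Fix(f)$ is $\Gal(\overline K/K)$-invariant, so that a $\Gal(\overline K/K)$-invariant set of three distinct fixed points exists if and only if $\#\Fix(f)=3$, or $\#\Fix(f)=4$ and at least one fixed point is $K$-rational—since a triple is invariant precisely when its complementary point is Galois-stable.

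With this dictionary the two implications are short. For \eqref{cond_2a} $\Rightarrow$ \eqref{cor_2} $\vee$ \eqref{cor_3}: if infinity is fixed we are in \eqref{cor_2}; otherwise $\deg q=3$ and $\deg(p-xq)=4$, and I must produce a $K$-root. When $\#\Fix(f)=4$ this is the rational fixed point furnished by \eqref{cond_2a}; when $\#\Fix(f)=3$ the degree-$4$ polynomial $p-xq$ has root multiplicities $(2,1,1)$, and as $\Gal(\overline K/K)$ preserves multiplicities the unique double root is Galois-fixed, hence lies in $K$. Either way $p-xq$ has a $K$-root, giving \eqref{cor_3}. Conversely, \eqref{cor_2} $\Rightarrow$ \eqref{cond_2a} since infinity is then a $K$-rational fixed point, and \eqref{cor_3} $\Rightarrow$ \eqref{cond_2a} since a $K$-root of $p-xq$ is a $K$-rational affine fixed point; in both cases removing a rational point (or taking the whole set when $\#\Fix(f)=3$) yields the required invariant triple.

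I expect the main obstacle to be the bookkeeping in the $\#\Fix(f)=3$, $\deg q=3$ subcase together with the role of $\mathrm{char}(K)\neq 2$ in \eqref{cor_3}. The multiplicity argument identifying the double root of $p-xq$ as $K$-rational is clean in the separable setting, but in characteristic $2$ the possible inseparability of $p-xq$ and the identification of a $K$-root with an honest distinct fixed point can degenerate, which is exactly why \eqref{cor_3} carries the hypothesis $\mathrm{char}(K)\neq 2$; I would state this caveat explicitly rather than treat characteristic $2$. A secondary point to check, but not hard, is that the relation $\sum_{i}1/(1-x_i)=1$ built into Lemma~\ref{lem_4_distinct} is automatic for any conjugate in fixed-point multiplier form by the holomorphic fixed-point formula \cite[Theorem 12.4]{Milnor-book} (the $x_i=\lambda_i\neq 1$ there), so that \eqref{cor_1} genuinely coincides with \eqref{cond_2c}.
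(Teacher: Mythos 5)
Your proposal is correct and follows essentially the same route as the paper: reduce everything to Theorem~\ref{thm:2}, match condition \eqref{cond_2c} with \eqref{cor_1} and \eqref{cond_2b} with \eqref{cor_4}, and translate \eqref{cond_2a} into \eqref{cor_2} or \eqref{cor_3} via whether $\deg q\le 2$ (infinity fixed) or $\deg q=3$ (fixed points are roots of the quartic $p(x)-xq(x)$, with the double root forced into $K$ when there are only three fixed points, using $\mathrm{char}(K)\neq 2$). You are in fact slightly more careful than the paper in verifying that the Galois clause of condition \eqref{cond_2b} is automatic given three distinct multipliers, but the overall argument is the same.
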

    \begin{proof}
    Assume there exists $\rho\in\PGL_2(\overline{K})$ such that $f^{\rho}$ is in partial fixed-point multiplier form. If $q(x)$ has degree equal to or less than $2$, then we are in case \eqref{cor_2}.

    Now, we can assume $q$ has degree $3$. If $f^\rho$ is in fixed-point multiplier form, then we are in the case of Lemma \ref{lem_4_distinct}. If $f^\rho$ is not in fixed-point multiplier form, then we must either have $\#\Aut(f)=2$ or $f$ has a Galois invariant set containing three fixed points (Theorem \ref{thm:2}). Note that all the fixed points of $f$ are finite, so they are the solutions of
    \begin{equation} \label{eq2}
        p(x)-xq(x)=0.
    \end{equation}
    If there are three roots forming a Galois invariant set, then one of the roots of \eqref{eq2} must be in $K$, or there are only three solutions to \eqref{eq2}. If there are only three roots, then one of the roots is a double root. The double root may be in $K$ if the root is ramified over $K$. Since \eqref{eq2} is a degree four polynomial defined over $K$ and $\text{char}(K)\neq 2$, \eqref{eq2} will not have an irreducible ramified factor. Therefore, the double root must be in $K$.

    Conversely, we demonstrate that each case implies $\rho\in\PGL_2(\overline{K})$ such that $f^{\rho}$ is in partial fixed-point multiplier form.

    Case (1) assumes the hypotheses of Lemma~\ref{lem_4_distinct} (four distinct fixed point multipliers which form a Galois invariant set) and Lemma~\ref{lem_4_distinct} conclude the existence of a unique $\rho$ so that $f^{\rho}$ is in fixed-point multiplier form.

    Cases (2) and (3) imply there is a Galois invariant set of three fixed points of $f$. We can then apply Lemma \ref{lem:possible mobius transforms} to conclude the existence of a $\rho$ so that $f^{\rho}$ is in partial-fixed point multiplier form.

    Finally, case (4) together with the assumption of at least three distinct fixed points puts us in the case of Theorem \ref{thm:2}(2b). Theorem \ref{thm:2} then concludes the existence of a $\rho$ so that $f^{\rho}$ is in partial-fixed point multiplier form.
    \end{proof}
    To find a rational map that has no conjugate defined over the field of moduli with the property that the fixed points are the multipliers, we simply take one that does not satisfy the assumptions of Theorem \ref{thm:2}.
    \begin{ex}
        Let $f(x) = \frac{x^3+x+1}{x^3}$. Then $f$ does not have a Galois invariant set of three fixed points, a nontrivial automorphism, nor is in fixed-point multiplier form. The fixed points of $f$ are
        \begin{equation*}
            \left\{i, -i, \frac{1+\sqrt{5}}{2}, \frac{1-\sqrt{5}}{2} \right\}
        \end{equation*}
        with associated multipliers
        \begin{equation*}
            \left\{-2i-3, 2i+3, \frac{5\sqrt{5}-13}{2}, \frac{-5\sqrt{5}-13}{2} \right\}
        \end{equation*}
        That $f$ has no conjugate in fixed-point multiplier form can then be easily checked by computer by examining all possible conjugations moving fixed points to multipliers.
    \end{ex}
    \begin{rem}
        Note that the set of maps which cannot be conjugated to partial fixed-point multiplier form defined over $K$ is Zariski dense in the space of degree $3$ rational maps.
        This is because conditions (\ref{cor_1}, \ref{cor_2}, and \ref{cor_4}) from Corollary \ref{cor:3} are closed conditions and condition \eqref{cor_3} is contained in a (type II) thin set in the sense of Serre.
    \end{rem}

\bibliographystyle{plain}

\end{document}